\definecolor{cite}{RGB}{44,123,182}
\definecolor{ref}{RGB}{215,25,28}
\renewcommand{\arraystretch}{1.1}
\theoremstyle{plain}
\newtheorem{thm}{Theorem}[section]
\newtheorem{lemma}[thm]{Lemma}
\newtheorem{proposition}[thm]{Proposition}
\newtheorem*{thm*}{Theorem}
\newtheorem*{corollary*}{Corollary}
\newtheorem*{lemma*}{Lemma}
\newtheorem*{ld*}{Lemma/Definition}
\newtheorem*{proposition*}{Proposition}
\newtheorem*{assumption*}{Assumption}
\theoremstyle{definition}
\newtheorem{definition}[thm]{Definition}
\newtheorem{example}[thm]{Example}
\newtheorem*{definition*}{Definition}
\newtheorem*{example*}{Example}
\newtheorem*{xca*}{Exercise}
\newtheorem*{claim*}{Claim}
\newtheorem*{fact*}{Fact}
\newtheorem*{notation*}{Notation}
\newtheorem*{construction*}{Construction}
\newtheorem*{ack*}{Acknowledgements}
\newtheorem*{question*}{Question}
\newtheorem*{problem*}{Problem}
\newtheorem*{conjecture*}{Conjecture}
\theoremstyle{remark}
\newtheorem{remark}[thm]{Remark}
\newcommand{\C}{\mathbb{C}}
\newcommand{\Z}{\mathbb{Z}}
\newcommand{\Q}{\mathbb{Q}}
\DeclareMathOperator{\GL}{GL}
\DeclareMathOperator{\SL}{SL}
\DeclareMathOperator{\dL}{\mathbf{L}}
\DeclareMathOperator{\textch}{ch}												
\newcommand{\ch}[1]{\textch_{{#1}}}
\newcommand{\sX}{\mathcal{X}}       
\newcommand{\sY}{\mathcal{Y}}
\newcommand{\sO}{\mathcal{O}}
\newcommand{\sS}{\mathcal{S}}
\newcommand{\sF}{\mathcal{F}}
\newcommand{\st}{\,|\,}					
\newcommand{\AAA}[1]{\textcolor{red}{#1}}
\DeclarePairedDelimiter{\set}{\lbrace}{\rbrace}
\DeclarePairedDelimiter{\pair}{\langle}{\rangle}
\DeclarePairedDelimiter{\abs}{\lvert}{\rvert}
\begin{document}

\title[Riemann-Roch coefficients for Kleinian orbisurfaces]{Riemann-Roch coefficients for Kleinian orbisurfaces}

\author[B. Lim]{Bronson Lim}
\address{BL: Department of Mathematics \\ California State University San Bernardino \\ San Bernardino,
  CA 92407, USA} 
\email{BLim@csusb.edu}

\author[F. Rota]{Franco Rota}
\address{FR: School of Mathematics and Statistics, University of Glasgow, Glasgow, UK}
\email{franco.rota@glasgow.ac.uk}

\thanks{On behalf of all authors, the corresponding author states that there is no conflict of interest.}

\subjclass[2020]{14C40}

\keywords{}

\begin{abstract}
	Suppose \(\mathcal{S}\) is a smooth, proper, and tame Deligne-Mumford stack. To\"en's Grothendieck-Riemann-Roch theorem requires correction terms, involving components of the inertia stack, to the standard formula for schemes. We give a brief overview of To\"en's Grothendieck-Riemann-Roch theorem, and explicitly compute the correction terms in the case of an orbifold surface with stabilizers of types ADE.
\end{abstract}

\maketitle




\section{Introduction}

Let \(\sS\) be a \textit{Kleinian orbisurface} over an algebraically closed field \(\mathbf{k}\). That is, \(\sS\) is a smooth, proper, and tame Deligne-Mumford surface with isolated stacky locus, ADE stabilizers, and projective coarse moduli. The classical Riemann-Roch theorem fails for orbisurfaces as it fails to capture contributions from the stacky locus. The To\"en-Hirzebruch-Riemann-Roch theorem provides corrections terms to the classical formula for \(\sS\) coming from the inertia stack \cite{Toe99}. These correction terms arise by pulling back to the inertia stack to determine the contributions from the twisted sectors. 

The purpose of this paper is to determine the correction terms for \(\sS\). This has already been accomplished when \(\sS\) has a stacky point of type \(A\), \cite[Section 3.3]{Lie11}. In \cite[Appendix A]{CT19}, the authors compute the correction term for the sheaf $\sO_\sS$ in all $ADE$ cases. Moreover, they relate the correction term to the exceptional divisor of the minimal resolution of the coarse moduli \(S\) of \(\sS\). This is a manifestation of the famous Bridgeland-King-Reid theorem asserting a derived equivalence between the derived categories of \(\sS\) and \(S\) \cite{BKR01}. 

The remaining cases are the binary tetrahedral (\(E_6\)), binary octahedral (\(E_7\)), binary icosahedral (\(E_8\)), and the binary dihedral groups (\(D\)). 
In Section \ref{sec:prelims}, we give an overview of To\"en's Riemann-Roch theorem for stacks. Section \ref{sec:orbisurf} introduces Kleinian orbisurfaces and spells out the Riemann-Roch theorem for this case. In Proposition \ref{thm_RR_coeff}, we recall a formula for the correction terms for a general ADE singularity, which relies on coefficients determined solely by the character table. 
In Section \ref{sec_Correction_terms}, we compute explicitly these Riemann-Roch coefficients in each of the ADE cases. In a different language, similar correction terms have been computed in \cite[Sec. 5]{Lan00}; however, the present formulation is more natural when studying orbisurfaces from the stacky point of view. 

\subsection*{Notation and conventions}
For a smooth quasi-projective scheme $X$ over $\C$, we denote by $K(X)$ the Grothendieck group of coherent sheaves on $X$, and by $H^*(X)$ the singular cohomology with rational coefficients of its associated analytic space. We assume all algebraic stacks to be Deligne-Mumford, which implies that stabilizers are finite groups.
We use Vistoli's definition of Chern and Todd classes for Deligne-Mumford stacks \cite{Vistoli}, and denote by $K(\sX)$ and $H^*(\sX)$ the Grothendieck group and the (rational) singular homology of a smooth Deligne-Mumford stack $\sX$ as described in \cite{AGV08}.

\section{To\"en's Grothendieck-Riemann-Roch theorem for Deligne-Mumford stacks}
\label{sec:prelims}

In this section, we give an overview of To\"en's Riemann-Roch theorem for stacks \cite{Toe99}. The theorem holds in arbitrary characteristic, but for ease of exposition we work over the field of complex numbers\footnote{The main technical subtlety when working in positive characteristic is to make sure that the Chern character takes values in an appropriate cohomology theory. In \cite{Toe99}, this is 
\'etale cohomology of an algebraic stack.}. We direct the interested reader to other accounts of this result such as \cite[Appendix A]{BCY12}, \cite[Appendix A]{Tse10}, and to Edidin's equivariant Riemann-Roch formulation \cite{Edi13}.

Our first step is to recall the statements for schemes (see for example \cite[Chapter 15]{Ful84}). Let $X$ be a smooth projective scheme, and $E$ a perfect complex of sheaves on $X$. Denote by $K(X)$ the Grothendieck group of coherent sheaves of $X$, and by $H^*(X)$ its singular cohomology. Then there is a linear map\footnote{Here and from now on we use the same notation for $E$ and its class in the Grothendieck group, unless confusion can arise.}
\begin{equation}
    \label{eq:RRtau_classic}
        \tau_X \colon  K(X) \longrightarrow  H^*(X) \qquad \qquad  E \longmapsto \ch{}(E)\cdot \mathrm{Td}(X),
\end{equation}
where $\ch{}$ denotes the Chern character and $\mathrm{Td}$ denotes the Todd class. 
The Grothendieck-Riemann-Roch theorem states that $\tau$ is functorial with respect to proper push forwards, i.e. if $f\colon X \to Y$ is a proper map of smooth quasi-projective schemes, and $E$ is a perfect complex of sheaves on $X$, then $\tau_Y(f_* E) = f_*(\tau_X(E))$. The special case of $Y=\mathrm{pt}$ yields the Hirzebruch-Riemann-Roch theorem, which asserts 
\[ \chi(E) = \int\limits_X \ch{}(E)\cdot \mathrm{Td}(X).  \]

For $\sX$ a Deligne-Mumford stack, the analogous of the operator $\tau_X$ is a map $\tau_\sX$ valued in a suitable extension of scalars of the cohomology of the inertia stack $I_\sX$ of $\sX$. Recall the definition of the inertia stack:
\begin{definition}[{\cite[8.1.17]{Ols16}}]
    Let $\sX$ be an algebraic stack. The \textit{inertia stack} $I_\sX$ is the fibered product of the diagram
    \[
    \begin{tikzcd}
     & \sX \arrow{d}{\Delta}\\
     \sX \arrow{r}{\Delta} & \sX \times \sX,
    \end{tikzcd}\]
    where $\Delta$ is the diagonal embedding. 
\end{definition}

\begin{remark}
\label{rmk:InertiaOfQuotient}
    More explicitly, the objects of $I_\sX$ are pairs $(x,g)$ where $x$ is an object of $\sX$ lying above a scheme $T$, and $g$ is an automorphism of $x$ in $\sX(T)$.
    
    Suppose $\sX = [Z/H]$ where $H$ is a finite group acting on a variety $Z$. Sections of $\sX$ are $H$-torsors equipped with an equivariant map to $Z$. Then, $I_\sX$ can be canonically identified with $[\hat{Z}/H]$, where $\hat{Z}=\{ (z,h)\in Z\times H \st h.z=z \}$. Therefore we have 
    \[ I_\sX \simeq \coprod_{(h)\in c(H)} [Z^h/C_G(h)],\]
    where $c(H)$ is the set of conjugacy classes of $H$, and $C_G(h)$ denotes the centralizer of a conjugacy class $(h)$.
\end{remark}


We denote by $\mu_{\infty}$ the subgroup of $\C^*$ containing all roots of unity, and define \(\Lambda \coloneqq \mathbb{Q}(\mu_\infty)\) to be the rational numbers adjoined $\mu_{\infty}$. As usual, for every $\Z$-module $A$ we denote by $A_\Lambda$ the tensor product $A \otimes_\Z \Lambda$.
Define a map 
\[ 
  \rho\colon K(I_\sX) \to K(I_\sX)_\Lambda
\]
as follows. A vector bundle $E$ over $I_\sX$ decomposes as a sum of eigenbundles
 $\oplus_{\zeta \in \mu_{\infty}} E^{(\zeta)}$ as in the proof of \cite[Th\'eor\`eme 3.15]{Toe99}\footnote{The decomposition, roughly, works as follows. A section $f\in I_{\sX}(T)$ for some scheme $T$ is the datum of a section $x\in \sX(T)$ with an automorphism $a$ of $x$. Then, the bundle $f^*E$ on $T$ is equipped with an action of $<a>$, which is diagonalizable by the tameness assumption. For $\zeta\in \mu_{\infty}$, one shows that the $\zeta$-eigenbundle of $f^*E$ can be written as $f^*E^{(\zeta)}$, for some vector bundle $E^{(\zeta)}$ on $I_\sX$.}. Then, let 
 \[  \rho(E)\coloneqq \sum\limits_{\zeta\in \mu_\infty} \zeta E^{(\zeta)}. \]

\begin{definition}
Let $\sX$ be a tame smooth Deligne-Mumford stack with quasi-projective coarse moduli space. 
Define the \textit{weighted Chern character}, \(\widetilde{\ch{}}\colon K(\sX)\to H^\ast(I_\sX)_\Lambda\), as the composition
\[ 
	K(\sX)\xrightarrow{\sigma^*} K(I_\sX) \xrightarrow{\rho}
  K(I_\sX)_\Lambda \xrightarrow{\ch{}} H^\ast(I_\sX)_\Lambda
\]
where $\sigma\colon I_\sX \to \sX$ is the projection (onto either factor) and $\ch{}$ is the usual Chern
character\footnote{The singular homology of a Deligne-Mumford stack coincides rationally with that of its coarse moduli space. Then, we can regard the Chern character as landing in the \textit{Chen-Ruan orbifold cohomology} of $\sX$, defined in \cite{CR04} as the singular homology of the coarse moduli space $\underline{I_\sX}$ of $I_\sX$. 
}.
\label{def_Chern_character}
\end{definition}

Next, we define the weighted Todd class of $\sX$. This is a modification of the usual Todd class of $I_\sX$. Let $N$ denote the normal bundle of the local immersion $\sigma\colon I_\sX \to \sX$, and define  
\[ \lambda_{-1}(N^\vee) \coloneqq \sum_i (-1)^i \bigwedge^i N^\vee \quad \in K(I_\sX). \]
The element $\rho(\lambda_{-1}(N^\vee))$ is invertible in $K(I_\sX)_\Lambda$ by \cite[Lemme 4.6]{Toe99}. Define the \textit{weighted Todd class} of $\sX$ as
\begin{equation}
    \label{eq:weightedTD}
\widetilde{\mathrm{Td}}(\sX) \coloneqq \frac{\mathrm{Td}(I_\sX)}{\ch{}(\rho(\lambda_{-1}(N^\vee)))} 
\end{equation}
and the \textit{To\"en map} $\tau_\sX \colon K(\sX) \to H^*(I_\sX)_{\Lambda}$ as
\[ \tau_\sX (E) \coloneqq \widetilde{\ch{}}(E) \cdot  \widetilde{\mathrm{Td}}(\sX). \]

To\"en's Riemann-Roch theorem for stacks asserts that $\tau_\sX$ behaves functorially with respect to proper push forwards:

\begin{thm}[{\cite[Th\'eor\`eme 4.10]{Toe99}}]
Let $f\colon \sX \to \sY$ be a proper morphism of smooth Deligne-Mumford stacks with quasi-projective coarse moduli spaces. Then for all \(E \in K(\sX)\) we have
\[ f_*(\tau_\sX(E)) = \tau_{\sY}(f_*E).\]
Moreover, if $f\colon \sX \to \mathrm{pt}$, we obtain
    \begin{equation}
        \label{eq:THRR}
    \chi(E) = \int\limits_{I_\sX}\tau_\sX(E).
    \end{equation}
    \label{thm:tgrr}
\end{thm}
Expanding the expression \eqref{eq:THRR} we have 
\begin{equation}
    \label{eq:correctionterm}
 \chi(E) = \int\limits_{I_\sX}\widetilde{\ch{}}(E).\widetilde{\mathrm{Td}}(\sX) =  \int\limits_{\sX}\widetilde{\ch{}}(E).\widetilde{\mathrm{Td}}(\sX) + \delta(E),
\end{equation}
with $\delta(E)\coloneqq \int\limits_{I_\sX\setminus
\sX}\widetilde{\ch{}}(E).\widetilde{\mathrm{Td}}(\sX)\in \Q$ the
correction term.


\section{Riemann-Roch for kleinian orbisurfaces}
\label{sec:orbisurf}


\subsection{Kleinian orbisurfaces}

Our object of study is the stacky resolution of singularities of \(ADE\) type (also known as Kleinian singularities).

\begin{definition}
  An \textit{orbisurface} is a smooth, proper, and tame Deligne-Mumford surface $\sS$ over an algebraically closed field $\mathbf{k}$ with projective coarse moduli and isolated stacky locus. 
  \label{def:orbisurface}
\end{definition}

For any orbisurface \(\sS\), the stacky locus is a finite union of residual gerbes corresponding to finitely many \(\mathbf{k}\)-points \(p_i\in\sS(\mathbf{k})\), i.e.
\[
	\mathrm{Stack}(\sS) = \coprod_{i=1}^rBG_i
\]
where \(G_i = \mathrm{stab}(p_i)\) is a finite subgroup of $\GL_2$.

\begin{definition}
An orbisurface is \textit{Kleinian} if each \(G_i\) is a subgroup of \(\mathrm{SL}_2\).
	\label{def_Klein_orbisurface}
\end{definition}

\begin{example} Let \(S\) be a surface with tame Kleinian singularities\footnote{The $ADE$ classification holds in characteristic $p>0$ as well, as long as the orders of the stabilizers is coprime with $p$. In this setting, quotients of $\mathbb{A}^2$ by finite subgroups of $\SL_2$ classify $F$-rational Gorenstein surface singularities \cite[\S 3]{Has15}.}. Then there exists a Kleinian
orbisurface \(\sS^{can}\) and a map \(\pi\colon\sS^{can}\to S\) such that:
\begin{itemize}
\item the restriction $  \sS^{can}\setminus\pi^{-1}(\mathrm{Sing}(S))\to S\setminus\mathrm{Sing}(S)$ is an isomorphism;
\item $\pi$ is universal among all dominant, codimension
preserving maps to \(S\).
\end{itemize}
The stack \(\sS^{can}\) is called the \textit{canonical stack}
associated with the surface \(S\), see \cite{FMN10}.
\end{example}

We will compute a formula for the correction term $\delta(E)$ appearing in \eqref{eq:correctionterm} in the case of a Kleinian orbisurface. 
Since \(\delta(E)\) is computed at each residual gerbe independently, we may and will assume that \(\sS\) has a single stacky point, \(p\), with residual gerbe \(\iota\colon BG\hookrightarrow \sS\). 
We will see that the correction terms involve coefficients determined solely by the natural action of \(G\) on the tangent space \(T_p\sS\). For any subgroup $G$ of $\GL_2$, we denote by $V\simeq \mathbb{A}^2$ the natural representation. 

\subsection{The weighted Todd class}

Let $\sS$ be a Kleinian orbisurface with a single stacky point $p$ with stabilizer $G$.
Arguing as in Remark \ref{rmk:InertiaOfQuotient}, we see that the inertia stack of $\sS$ is 
\[ 
  I_\sS = \sS \sqcup (I_{BG} \setminus BG).
\]
Here 
\[ 
  I_{BG} \setminus BG = \bigsqcup\limits_{(g)\neq (1)}BC_G(g),
\]
where the union is taken over all conjugacy classes $(g)$ of non-trivial elements $g\in G$.
Fix one of the components $BC_G(g)$. Its normal bundle in $\sS$ is identified with $T_p\sS = V^\vee$. Then the class $\lambda_{-1}(N^\vee)$ restricted to $BC_G(g)$ is
\[\lambda_{-1}(N^\vee)_{|BC_G(g)} =[\mathbf{1}] - [V] + [\wedge^2V] = 2[\mathbf{1}] -[V]\]
in $K(BC_G(g))$ (which is free, abelian, and generated by irreducible representations of $C_G(g)$). The element $g$ acts diagonally on $V$, with eigenvalues some roots of unity $\xi_g$ and $\xi_g^{-1}$. Thus, 
\[ \ch{}(\rho(\lambda_{-1}(N^\vee)_{|BC_G(g)})) = 2-\xi_g-\xi_g^{-1}=2-\chi_V(g) \quad \in \Q(\mu_\infty),\]
where \(\chi_V(g) = \xi_g+\xi_g^{-1}\) is the character of $V$ evaluated at $g$, i.e. the trace of $g$ acting through the representation $V$.
Using \eqref{eq:weightedTD} we obtain:
\[\int\limits_{BC_G(g)} \widetilde{\mathrm{Td}}_\sS =
  \int\limits_{BC_G(g)} \frac{1}{2-\chi_V(g)}= \frac{1}{\abs{C_G(g)}} \cdot \frac{1}{2-\chi_V(g)}. 
\]
Integrating over the twisted sector:
\begin{equation}
    \label{eq:TDon TwistedSector}
 \delta(\sO_\sS) = \int\limits_{I_{BG}\setminus BG} \widetilde{\mathrm{Td}}_\sS =
  \sum\limits_{(g)\neq (1)}\frac{1}{\abs{C_G(g)}} \cdot \frac{1}{2-\chi_V(g)} 
\end{equation}

\begin{remark}
If \(\sS\to S\) is the projection to the coarse moduli space, then \(S\) has an ADE singularity at the image of the stacky point. The integral \eqref{eq:TDon TwistedSector} is computed in \cite{CT19} to be 
\[
  \delta(\sO_\sS) = \frac{1}{12}\left(\chi_{top}(C_{red})-\frac{1}{\abs{G}}\right),
\]
where $C$ is the fundamental cycle of the minimal resolution of the singularity.
\end{remark}

\subsection{Riemann-Roch coefficients}

In this section, we write an expression of the term $\delta(E)$ appearing in \eqref{eq:correctionterm} in terms of the wieghted Chern character of $E$ and of the character table of the stabilizer group $G$. 

The Grothendieck group of \(BG\) is free, Abelian and generated by the
irreducible representations of $G$ $\{ \rho_i \,|\,i=0,...,M \}$. For any
perfect complex of sheaves \(E\) on \(\sS\), its derived fiber is a formal linear combination
\[
    [{\dL}\iota^\ast E] = \sum_{i=0}^{M} a_i\rho_i \in K(BG).
\]
On the component $BC_G(g)$, the element $g$ acts on $\rho_i$ with eigenvalues denoted $\zeta_i^{(l)}$,
to which correspond eigenspaces $\rho_i^{(l)}$ (the action is diagonalizable by the tameness assumption). Therefore, $\dL\iota^*E$
decomposes on $BC_G(g)$ into weighted eigenbundles as 
\[\sum\limits_{i=0}^M
\sum\limits_{l=1}^{\dim \rho_i} a_i \zeta_i^{(l)} \rho_i^{(l)}.\]
Denote by $\chi_i \coloneqq \chi_{\rho_i} = \mathrm{Tr}\circ \rho_i$ the character of the repesentation $\rho_i$.

\begin{definition}
    For each \(i=0,\ldots,M\) set
	\begin{equation}\label{eq_formula_for_Ti}
		T_i \coloneqq  \sum\limits_{(g)\neq (1)} \frac{\chi_i(g)}{\abs{C_G(g)}(2-\chi_V(g))}.
	\end{equation}
 	We call the \(T_i\) the \textit{Riemann-Roch coefficients} of \(G\).
\end{definition}

\begin{proposition}
	The correction term for a complex of sheaves \(E\) on \(\sS\) can be written as
	\[
		\delta(E) = \sum_{i=0}^Ma_iT_i.
	\]
	In particular, it only depends on the ranks of eigenbundles of ${\dL}\iota^\ast E$ and the coefficients \(T_i\). The latter only depend on the character table of \(G\).
	\label{thm_RR_coeff}
\end{proposition}

\begin{proof}
The weighted Chern character of $\dL\iota^*E_{|BC_G(g)}$ is given by 
\begin{equation*}\label{eq_weighted_Chern_char}
  \widetilde{\ch{}}(\dL\iota^*E_{|BC_G(g)}) = \sum\limits_{i=0}^M \sum_{l=1}^{r_i} a_i \zeta_i^{(l)} =  \sum_{i=0}^M a_i \chi_i(g).
\end{equation*}

Thus we have
\begin{equation}\label{eq_formula_for_delta}
\delta(E)= 
\int\limits_{I_\sS\setminus     \sS}\widetilde{\ch{}}(E).\widetilde{\mathrm{Td}}_\sS=
     \sum\limits_{(g)\neq (1)} \frac{1}{\abs{C_G(g)}}\cdot\frac{\sum_ia_i\chi_i(g)}{2-\chi_V(g)}=
\sum_i a_i T_i,
\end{equation}
with 
\[
  T_i\coloneqq  \sum\limits_{(g)\neq (1)} \frac{\chi_i(g)}{\abs{C_G(g)}(2-\chi_V(g))}. \qedhere
\]
\end{proof}

\section{Computation of Riemann-Roch Coefficients}
\label{sec_Correction_terms}

Now we obtain formulae for the correction term \eqref{eq_formula_for_delta}, by explicitly computing the corresponding Riemann-Roch coefficients $T_i$ for all Kleinian singularities. This extends the computation done in \cite{Lie11} for singularities of type $A$, and completes the one started in \cite{CT19}, where the authors only compute $\delta(\sO_\sS)$. The computations do not depend on the characteristic of the base field, and neither do the results.

\subsection{Singularities of type \texorpdfstring{$A$}{A}} In this case, the coefficients $T_j$ are computed by Lieblich in \cite[Sec. 3.3.2]{Lie11}, who gives an explicit formula for $\delta(F)$. We recall his result here. We'll make use of the following Lemma:
\begin{lemma}[{\cite[Lemma 3.3.2.1]{Lie11}}]\label{lem_Lieblich}
Let $\zeta$ be a primitive $P$-th root of unity and $j\leq P$ a non-negative integer. Then
\[ \sum\limits_{k=1}^{P-1}\frac{\zeta^{kj}}{2-\zeta^k -\zeta^{-k}}=\frac{j(j-P)}{2} + \frac{P^2-1}{12}. \]
\end{lemma}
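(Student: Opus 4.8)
The plan is to treat the left-hand side as a sequence in $j$ and exploit a second-difference recurrence. Write
\[
  S_j \coloneqq \sum_{k=1}^{P-1}\frac{\zeta^{kj}}{2-\zeta^k-\zeta^{-k}},
\]
noting that each denominator factors as $(1-\zeta^k)(1-\zeta^{-k})$ and is nonzero for $1\le k\le P-1$, so $S_j$ is well defined. The first step is the elementary identity $\zeta^{k(j+1)}-2\zeta^{kj}+\zeta^{k(j-1)}=-\zeta^{kj}(2-\zeta^k-\zeta^{-k})$, in which the factor $2-\zeta^k-\zeta^{-k}$ cancels the denominator. Dividing by the denominator and summing over $k$ gives
\[
  S_{j+1}-2S_j+S_{j-1} = -\sum_{k=1}^{P-1}\zeta^{kj}.
\]
Since $\sum_{k=0}^{P-1}\zeta^{kj}$ equals $0$ when $P\nmid j$ and $P$ when $P\mid j$, the right-hand side equals $1$ for every $j$ with $1\le j\le P-1$. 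Thus $S_j$ has constant second difference $1$ on this range, exactly as does the candidate quadratic $Q(j)\coloneqq \frac{j(j-P)}{2}+\frac{P^2-1}{12}$.

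Comparing the two, set $D_j\coloneqq S_j-Q(j)$, so that $D_{j+1}-2D_j+D_{j-1}=0$ for $1\le j\le P-1$. Hence the first differences $D_{j+1}-D_j$ are constant and the finite sequence $D_0,\dots,D_P$ is arithmetic. Its two endpoints agree automatically: $\zeta^{kP}=1$ gives $S_P=S_0$, while $Q(P)=\frac{P(P-P)}{2}+\frac{P^2-1}{12}=Q(0)$, so $D_P=D_0$. An arithmetic sequence with equal endpoints is constant, and therefore the entire problem reduces to the single base case, namely showing that the constant value $D_0=S_0-\frac{P^2-1}{12}$ is zero.

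To evaluate $S_0=\sum_{k=1}^{P-1}\frac{1}{(1-\zeta^k)(1-\zeta^{-k})}$, I would use the cyclotomic factorization $f(x)\coloneqq\prod_{k=1}^{P-1}(x-\zeta^k)=1+x+\cdots+x^{P-1}$. The relation $1-\zeta^{-k}=-\zeta^{-k}(1-\zeta^k)$ rewrites the summand as $-\zeta^k/(1-\zeta^k)^2$, and both $\sum_k(1-\zeta^k)^{-1}$ and $\sum_k(1-\zeta^k)^{-2}$ are recovered from the logarithmic derivative $f'/f$ and its derivative evaluated at $x=1$, using $f(1)=P$, $f'(1)=\frac{P(P-1)}{2}$, and $f''(1)=\frac{(P-1)P(P-2)}{3}$. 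A short simplification then yields $S_0=\frac{P^2-1}{12}$, so $D_0=0$ and $S_j=Q(j)$ for all $0\le j\le P$, which is the claim. Alternatively one may simply cite the classical identity $\sum_{k=1}^{P-1}\csc^2(\pi k/P)=\frac{P^2-1}{3}$, since $2-\zeta^k-\zeta^{-k}=4\sin^2(\pi k/P)$.

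I expect the base case $S_0=\frac{P^2-1}{12}$ to be the only genuinely nontrivial input; the recurrence and the interpolation argument are purely formal. The minor points to watch are the endpoint bookkeeping (the recurrence holds only for $1\le j\le P-1$, which is precisely the range needed to pin down a degree-two sequence on $\{0,\dots,P\}$ from its endpoint values) and the degenerate cases $P=1,2$, where the identity holds trivially.
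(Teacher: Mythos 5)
Your proof is correct, but it cannot be ``essentially the same'' as the paper's, because the paper contains no proof at all: the lemma is imported verbatim from Lieblich \cite[Lemma 3.3.2.1]{Lie11} and used as a black box. Your argument is therefore a self-contained replacement, and it is a clean one. The two pillars both check out. First, the formal part: the identity $\zeta^{k(j+1)}-2\zeta^{kj}+\zeta^{k(j-1)}=-\zeta^{kj}(2-\zeta^k-\zeta^{-k})$ gives $S_{j+1}-2S_j+S_{j-1}=-\sum_{k=1}^{P-1}\zeta^{kj}=1$ for $1\le j\le P-1$ (since $P\nmid j$ there), matching the second difference of $Q(j)=\tfrac{j(j-P)}{2}+\tfrac{P^2-1}{12}$, and the endpoint identification $S_P=S_0$, $Q(P)=Q(0)$ forces the arithmetic sequence $D_j=S_j-Q(j)$ to be constant on $\{0,\dots,P\}$. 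Second, the base case, which you only sketch, does close as claimed: with $f(x)=1+x+\cdots+x^{P-1}$ one gets
\[
\sum_{k=1}^{P-1}\frac{1}{1-\zeta^k}=\frac{f'(1)}{f(1)}=\frac{P-1}{2},
\qquad
\sum_{k=1}^{P-1}\frac{1}{(1-\zeta^k)^2}=\frac{f'(1)^2-f(1)f''(1)}{f(1)^2}=\frac{(P-1)(5-P)}{12},
\]
and writing $\frac{-\zeta^k}{(1-\zeta^k)^2}=\frac{1}{1-\zeta^k}-\frac{1}{(1-\zeta^k)^2}$ gives $S_0=\frac{P-1}{2}-\frac{(P-1)(5-P)}{12}=\frac{P^2-1}{12}$, as needed; alternatively the $\csc^2$ identity you cite does the same job. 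What your route buys is transparency about where the arithmetic content lives: the quadratic dependence on $j$ is purely formal (a three-term recurrence plus interpolation), and the single nontrivial input is the classical evaluation of $\sum_k\bigl(4\sin^2(\pi k/P)\bigr)^{-1}$. The only points requiring care --- the recurrence failing at $j\equiv 0\pmod P$ and the degenerate cases $P=1,2$ --- you flag explicitly, and they are handled correctly.
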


The \(K\)-theory of \(B\mu_N\) is free Abelian of rank $N$ with $\{ \chi^j \,|\,j=0,...,N-1 \}$ 
as a basis. For any perfect complex of sheaves \(\sF\) on \(\sS\), we
have
\[
    [{\bf L}\iota^\ast\sF] = \sum_{j=0}^{N-1} a_j\chi^j.
\]
Define a function $f\colon \Z/N\Z \to \Q$ by the formula
\[ f(\overline{x})=\frac{x(x-N)}{2} + \frac{N^2-1}{12}. \]

Let $\zeta\in G$ be a primitive $N$-th root of unity. It acts on $\chi^j$ with multiplication by $\zeta^j$. Then, equation \eqref{eq_formula_for_Ti} reads
\[ T_j = \frac 1N \sum\limits_{k=1}^{N-1}\frac{\zeta^{kj}}{2-\zeta^k -\zeta^{-k}} =\frac 1N f(j) \]
as a consequence of Lemma \ref{lem_Lieblich}.

\subsection{Singularities of type \texorpdfstring{$D$}{D}: Binary dihedral groups} In this case, the group acting is the binary dihedral group $G=\mathrm{Dic}_n$, it has order $4n$ and it gives rise to a singularity of type $D_{n+2}$ with $n\geq 2$. We can present it as 
\[ \mathrm{Dic}_n=\pair{a,x \,\lvert\, a^{2n}=I, x^2=a^n, x^{-1}ax=a^{-1}}. \]
The center of $G$ is cyclic of order 2, generated by $x^2=a^n$. The quotient of $G$ by its center is the dihedral group $\mathrm{Dih}_n$ with $2n$ elements. 

For the representation theory of $\mathrm{Dic}_n$, we point the reader to \cite[\S 13]{IN99} or to \cite[\S 7.1]{Cox74}. The group $G$ has $n+3$ conjugacy classes, grouped by cardinality as: 
\begin{align*}
  \set{I},  \set{-I=x^2=a^n}\\
  \set{a,a^{-1}}, \set{a^2,a^{-2}}, ... \set{a^{n-1},a^{n+1}},\\
  \set{xa,xa^3,...,xa^{2n-2}}, \set{x,xa^2,...,xa^{2n-1}}.
\end{align*}
The corresponding centralizers have cardinality $4n$, $2n$ and $4$.

There are 4 one-dimensional representations. There are several two-dimensional representations, called \textit{dihedral}, induced by $G\to \mathrm{Dih}_n$ there are $\frac{n-1}{2}$ of these if $n$ is odd, and $\frac{n-2}{2}$ if $n$ is even. The $l$-th dihedral representation is given by the assignment
\[ a\mapsto\begin{pmatrix} e^{\frac{2l\pi i}{n}} & 0\\ 0 & e^{-\frac{2l\pi i}{n}} \end{pmatrix}; \qquad x=\begin{pmatrix} 0 & 1\\ 1 & 0 \end{pmatrix}. \]

 The remaining representations are called of \textit{quaternionic type} since they are induced by an inclusion $G\subset \SL(2,\C)$. They are also two-dimensional, there are $\frac{n-1}{2}$ if $n$ is odd, or $\frac n2$ if $n$ is even. The $l$-th quaternionic representation is given by  
 \[ a\mapsto\begin{pmatrix} e^{\frac{l\pi i}{n}} & 0\\ 0 & e^{-\frac{l\pi i}{n}} \end{pmatrix}; \qquad x=\begin{pmatrix} 0 & -1\\ 1 & 0 \end{pmatrix}. \]
We assume that $G$ acts on $\C^2$ via the first ($l=1$) quaternionic representation, and denote by $V$ this representation. 

Let $T_\rho$ be the Riemann-Roch coefficient corresponding to a representation $\rho$. For $G=\mathrm{Dic}_n$, we have
\[ T_\rho = \frac{\chi_\rho(-I)}{16n} + \frac 18(\chi_\rho(x)+\chi_\rho(xa)) + \frac{1}{2n}\sum_{k=1}^{n-1}\frac{\chi_\rho(a^k)}{2-\chi_V(a^k)}. \]

First, we compute coefficients for the 4 one-dimensional representations. Let $\rho_x$ be the representation where $x$ acts by -1 and $a$ acts trivially, similarly for $\rho_a$, and write $\rho_{x,a}=\rho_x\otimes\rho_a$. Let $\zeta\coloneqq e^{-\frac{\pi i}{n}}$. By applying Lemma \ref{lem_Lieblich} we obtain
\begin{align}
T_{\rho_0}&= \frac{1}{16n} + \frac 14 + \frac{n^2-1}{12n}\\
        T_{\rho_x}&= \frac{1}{16n} - \frac 14 + \frac{n^2-1}{12n}.    \end{align}
The remaining coefficient is 
\begin{equation}\label{eq_express T_a}
T_{\rho_a}= T_{\rho_{x,a}}= \frac{1}{16n} + \frac{1}{2n}\sum_{k=1}^{n-1}\frac{(-1)^k}{2-\chi_V(a^k)}=-\frac{1}{16n} + \frac{1}{2n}\sum_{k=1}^{n-1}\frac{\zeta^{nk}}{2-\zeta^k-\zeta^{-k}}.
\end{equation}
Now we compute the the sum in \eqref{eq_express T_a}: substituting $h=2n-k$ we have
\begin{equation}\label{eq_substitute_hk}
    \begin{split}
    \sum_{k=1}^{n-1}\frac{\zeta^{-kl}}{\left(2-\zeta^k-\zeta^{-k}\right)}= \sum_{h=n+1}^{2n-1}\frac{\zeta^{(h-2n)l}}{\left(2-\zeta^{2n-h}-\zeta^{h-2n}\right)}= \sum_{h=n+1}^{2n-1}\frac{\zeta^{hl}}{\left(2-\zeta^{-h}-\zeta^{h}\right)}.
    \end{split}
\end{equation}
Using the \eqref{eq_substitute_hk} with $l=n$, write
\begin{equation*}
     \sum_{k=1}^{2n-1}\frac{\zeta^{nk}}{\left(2-\zeta^k-\zeta^{-k}\right)} = \sum_{k=1}^{n-1}\frac{\zeta^{nk}}{\left(2-\zeta^k-\zeta^{-k}\right)} + \frac{(-1)^n}{4} + \sum_{k=1}^{n-1}\frac{\zeta^{-nk}}{\left(2-\zeta^k-\zeta^{-k}\right)}.
     \end{equation*}
whence, applying Lemma \ref{lem_Lieblich} with $P=2n$ and $j=n$,
\begin{equation*}
\begin{split}
\sum_{k=1}^{n-1}\frac{\zeta^{nk}}{\left(2-\zeta^k-\zeta^{-k}\right)}= \frac 12 \left[     \sum_{k=1}^{2n-1}\frac{\zeta^{nk}}{\left(2-\zeta^k-\zeta^{-k}\right)} - \frac{(-1)^n}{4} \right] = \\
\frac 12 \left[    -\frac{n^2}{2} + \frac{4n^2-1}{12} - \frac{(-1)^n}{4} \right].
\end{split}
     \end{equation*}
    
If $\sigma$ is the $l$-th quaternionic representation, we have 
\begin{equation}
 T_\sigma = -\frac{1}{8n} +  \frac{1}{2n}\sum_{k=1}^{n-1}\frac{\left(\zeta^{kl}+\zeta^{-kl}\right)}{\left(2-\zeta^k-\zeta^{-k}\right)}. 
\end{equation}
Using once again the substitution \eqref{eq_substitute_hk}, we obtain:
\begin{equation*}
    \begin{split}
     \sum_{k=1}^{n-1}\frac{\left(\zeta^{kl}+\zeta^{-kl}\right)}{\left(2-\zeta^k-\zeta^{-k}\right)} = \sum_{k=1}^{n-1}\frac{\zeta^{kl}}{\left(2-\zeta^k-\zeta^{-k}\right)} + \sum_{k=1}^{n-1}\frac{\zeta^{-kl}}{\left(2-\zeta^k-\zeta^{-k}\right)}=\\
     \sum_{k=1}^{n-1}\frac{\zeta^{kl}}{\left(2-\zeta^k-\zeta^{-k}\right)} + \sum_{h=n+1}^{2n-1}\frac{\zeta^{hl}}{\left(2-\zeta^{-h}-\zeta^{h}\right)}= \sum_{k=1}^{2n-1}\frac{\zeta^{kl}}{\left(2-\zeta^{-k}-\zeta^{k}\right)} - \frac{(-1)^l}{4}.
    \end{split}
\end{equation*}
Applying Lemma \ref{lem_Lieblich} with $P=2n$ and $j=l$ then yields
\begin{equation*}
    \begin{split}
     \sum_{k=1}^{n-1}\frac{\left(\zeta^{kl}+\zeta^{-kl}\right)}{\left(2-\zeta^k-\zeta^{-k}\right)} = \frac{l(l-2n)}{2} + \frac{(2n)^2-1}{12}- \frac{(-1)^l}{4},
    \end{split}
\end{equation*}
and finally
\begin{equation}
T_\sigma = -\frac{1}{8n} +  \frac{1}{2n}\left( -\frac {(-1)^l}4 + \frac{l(l-2n)}{2} + \frac{(2n)^2-1}{12} \right). 
\end{equation}

If $\tau$ is a dihedral representation, then
\begin{equation}
T_\tau = -\frac{1}{8n} +  \frac{1}{2n}\sum_{k=1}^{n-1}\frac{\left(\zeta^{2kl}+\zeta^{-2kl}\right)}{\left(2-\zeta^{k}-\zeta^{-k}\right)} 
\end{equation}
and we may repeat the argument above applying Lemma \ref{lem_Lieblich} with $P=2n$ and $j=2l$:
\begin{equation}
\sum_{k=1}^{n-1}\frac{\left(\zeta^{2kl}+\zeta^{-2kl}\right)}{\left(2-\zeta^{k}-\zeta^{-k}\right)}=
\left( -\frac {1}4 + 2l(l-n) + \frac{(2n)^2-1}{12} \right).
\end{equation}

\subsubsection{Example: the $D_4$ singularity} The binary dihedral group $G=\mathrm{Dic}_n$ with $n=2$ has order $4n=8$. We present it as the group of matrices generated by 
\[ a=\begin{pmatrix} i & 0\\ 0 & -i \end{pmatrix}; \qquad x=\begin{pmatrix} 0 & -1\\ 1 & 0 \end{pmatrix}. \]

As above, let $\rho_0, \rho_a, \rho_x$ and $\rho_{xa}$ denote the one-dimensional representations. The two dimensional irreducible representation is denoted $V$.

For a sheaf $F$ we can write $[\dL i^*F]=\sum_{\rho}a_\rho \rho$, then the correction term is 
\[ \delta(F)= \sum_{\rho} a_\rho T_\rho \]
with $T_{\mathbbm 1}=\frac{13}{32}$, $T_{\rho_a}=T_{\rho_x}=T_{\rho_{xa}}=-\frac{3}{32}$ and $T_V=-\frac{2}{32}$.

As an example, we explicitly check Lemma 2.9 of \cite{LR20} in a few cases. It states that 
\[\delta(\sO_p\otimes \rho)=\begin{cases} 1-\frac{1}{|G|} &\mbox{ if }\rho=\rho_0\\
-\frac{\dim \rho}{|G|} &\mbox{ else}.
 \end{cases}\]

 Let $F=\sO_p$. Then the equivariant Koszul complex
 \[ \sO\otimes \Lambda^2V \to \sO \otimes V \to \sO \to \sO_p \]
 shows $[\dL i^*\sO_p]= 2\rho_0-V$. Plugging this in, we get 
\[ \delta(\sO_p)=2\cdot\frac{13}{32} - \frac{-2}{32}= \frac 78. \]

Now let $F=\sO_p\otimes V$. Observe that $V\otimes V = \oplus_{j=0}^3 \chi^j$, whence $[\dL i^*\sO_p]= 2V-(\sum \chi^j)$, i.e. $a_0=...=a_3=-1$ and $a_4=2$. The same computation as above yields $\delta(\sO_p\otimes V)=-\frac 14$.

\subsection{Singularities of type \texorpdfstring{$E$}{E}}
The groups giving rise to singularities of type $E$ are the binary tetrahedral ($E_6$), binary octahedral ($E_7$) and binary icosahedral group ($E_8$). We follow the notation of \cite[\S 14-16]{IN99}. These groups are constructed as follows. Set
\[
	\sigma = \begin{bmatrix}
		i & 0 \\
		0 & -i
	\end{bmatrix},\ 
	\tau = \begin{bmatrix}
	0 & 1 \\
	-1 & 0
	\end{bmatrix},\ 
	\mu = \frac{1}{\sqrt{2}}\begin{bmatrix}
	\varepsilon^7 & \varepsilon^7 \\
	\varepsilon^5 & \varepsilon \\
	\end{bmatrix}
\]
where \(\varepsilon = e^{\pi i/4}\). Then the binary tetrahedral group is the group \(2T = \langle \sigma, \tau,\mu\rangle\). Additionally, set
\[
	\kappa = \begin{bmatrix}
	\varepsilon & 0 \\
	0 &\varepsilon^7
	\end{bmatrix}.
\]
Then the binary octahedral group is \(2O = \langle 2T,\kappa\rangle\).

For the binary icosahedral group \(2I\) we set
\[
	\sigma = -\begin{bmatrix}
	\varepsilon^3 & 0 \\
	0 & \varepsilon^2 
	\end{bmatrix},\ 
	\tau = \frac{1}{\sqrt{5}}\begin{bmatrix}
	-(\varepsilon-\varepsilon^4) & \varepsilon^2-\varepsilon^3 \\
	\varepsilon2-\varepsilon^3 & \varepsilon-\varepsilon^4
	\end{bmatrix}
\]
where \(\varepsilon = e^{2\pi i/5}\). Then \(2I = \langle \sigma,\tau\rangle\). 

\subsubsection{The \(E_6\) singularity} The binary tetrahedral group $2T$ is of order 24. Its character table is in Table \ref{tab_Char_e6}, where \(\omega = \frac{-1+\sqrt{3}i}{2}\).
\begin{table}[h!]
\begin{center}
\begin{tabular}{c|ccccccc}
 & $1$ & $-1$ & $\tau$ & $\mu$ & $\mu^2$ & $\mu^4$ & $\mu^5$ \\
 \hline
$\abs{C_G(g)}$ & 24 & 24 & 4 & 6 & 6 & 6 & 6\\
\hline
$\rho_0$& $1$ & $1$ & $1$ & $1$ & $1$ & $1$ & $1$ \\
$\rho_2$ & $2$ & $-2$ & $0$ & $1$ & $-1$ & $-1$ & $1$ \\
$\rho_3$ & $3$ & $3$ & $-1$ & $0$ & $0$ & $0$ & $0$ \\
$\rho_2'$ & $2$ & $-2$ & $0$ & $\omega^2$ & $-\omega$ & $-\omega^2$ & $\omega$\\
$\rho_1'$ & $1$ & $1$ & $1$ & $\omega^2$ & $\omega$ & $\omega^2$ & $\omega$ \\
$\rho_2''$ & $2$ & $-2$ & $0$ & $\omega$ & $-\omega^2$ & $-\omega$ & $\omega^2$\\
$\rho_1''$  & $1$ & $1$ & $1$ & $\omega$ & $\omega^2$ & $\omega$ & $\omega^2$ \\ [1 ex]
\end{tabular}
\end{center}
\caption{Character Table for \(2T\)}
\label{tab_Char_e6}
\end{table}
Here \(\rho_2\) is the natural representation. The Riemann-Roch coefficients are computed as follows:
\begin{equation}
T_i =\frac{\chi_i(-1)}{96} + \frac{\chi_i(\tau)}{8} + \frac{\chi_i(\mu)}{6} + \frac{\chi_i(\mu^2)}{18} + \frac{\chi_i(\mu^4)}{18} + \frac{\chi_i(\mu^5)}{6}
\label{eqn_RR_e6}
\end{equation}

From \eqref{eqn_RR_e6} we can compute the Riemann-Roch coefficients which we arrange in Table \ref{tab_RR_e6}.
\bgroup
\def\arraystretch{1.3}
\begin{table}[h!]
\begin{center}
\begin{tabular}{c|cccccccc}
 & $\rho_0$ & $\rho_2$ & $\rho_3$ & $\rho_2'$ & $\rho_1'$ & $\rho_2''$ & $\rho_1''$ \\ \hline
$T_i$ & $\frac{167}{288}$ & $\frac{29}{144}$ & $-\frac{3}{32}$ & $-\frac{19}{144}$ & $-\frac{25}{288}$ & $-\frac{19}{144}$ & $-\frac{25}{288}$  \\ [1ex]
\end{tabular}
\end{center}
\caption{Riemann-Roch Coefficients for \(2T\)}
\label{tab_RR_e6}
\end{table}
\egroup

\subsubsection{The \(E_7\) singularity} 
The binary octahedral group \(2O\) is of order 48. Its character table is in Table \ref{tab_Char_e7}.

\begin{table}[h!]
\begin{center}
\begin{tabular}{c|cccccccc}
 & $1$ & $-1$ & $\mu$ & $\mu^2$ & $\tau$ & $\kappa$ & $\tau\kappa$ & $\kappa^3$\\
 \hline
$\abs{C_G(g)}$ & 48 & 48 & 6 & 6 & 8 & 8 & 4 & 8 \\
\hline
$\rho_0$& $1$ & $1$ & $1$ & $1$ & $1$ & $1$ & $1$ & $1$ \\
$\rho_2$ & $2$ & $-2$ & $1$ & $-1$ & $0$ & $\sqrt{2}$ & $0$ & $-\sqrt{2}$ \\
$\rho_3$ & $3$ & $3$ & $0$ & $0$ & $-1$ & $1$ & $-1$ & $1$ \\
$\rho_4$ & $4$ & $-4$ & $-1$ & $1$ & $0$ & $0$ & $0$ & $0$ \\
$\rho_3'$ & $3$ & $3$ & $0$ & $0$ & $-1$ & $-1$ & $1$ & $-1$ \\
$\rho_2'$ & $2$ & $-2$ & $1$ & $-1$ & $0$ & $-\sqrt{2}$ & $0$ & $\sqrt{2}$ \\
$\rho_1'$ & $1$ & $1$ & $1$ & $1$ & $1$ & $-1$ & $-1$ & $-1$ \\
$\rho_2''$ & $2$ & $2$ & $-1$ & $-1$ & $2$ & $0$ & $0$ & $0$ \\ [1ex]
\end{tabular}
\end{center}
\caption{Character Table for \(2O\)}
\label{tab_Char_e7}
\end{table}
Here \(\rho_2\) is the natural representation. The Riemann-Roch coefficients are computed as follows:
\begin{equation}
T_i = \frac{\chi_i(-1)}{192} + \frac{\chi_i(\mu)}{6} + \frac{\chi_i(\mu^2)}{18} + \frac{\chi_i(\tau)}{16} + \frac{\chi_i(\kappa)}{16-8\sqrt{2}} + \frac{\chi_i(\tau\kappa)}{8} + \frac{\chi_i(\kappa^3)}{16+8\sqrt{2}}
\label{eqn_RR_e7}
\end{equation}

We arrange them in Table \ref{tab_RR_e7}.
\bgroup
\def\arraystretch{1.3}
\begin{table}[h!]
\begin{center}
\begin{tabular}{c|cccccccc}
 & $\rho_0$ & $\rho_2$ & $\rho_3$ & $\rho_4$ & $\rho_3'$ & $\rho_2'$ & $\rho_1'$ & $\rho_2''$\\ \hline
$T_i$ & $\frac{383}{576}$ & $\frac{101}{288}$ & $\frac{5}{64}$ & $-\frac{19}{144}$ & $-\frac{11}{64}$ & $-\frac{43}{288}$ & $-\frac{49}{576}$ & $-\frac{26}{288}$ \\ [1ex]
\end{tabular}
\end{center}
\caption{Riemann-Roch Coefficients for \(2O\)}
\label{tab_RR_e7}
\end{table}
\egroup

\subsubsection{The \(E_8\) singularity} 

The binary icosahedral group \(2I\) is of order 120. Its character table is in Table \ref{tab_Char_e8}, where \(\mu^\pm = \frac{1\pm\sqrt{5}}{2}\).

\begin{table}[h!]
\begin{center}
\begin{tabular}{c|ccccccccc}
 & $1$ & $-1$ & $\sigma$ & $\sigma^2$ & $\sigma^3$ & $\sigma^4$ & $\tau$ & $\sigma^2\tau$ & $\sigma^7\tau$\\
 \hline
$\abs{C_G(g)}$ & 120 & 120 & 10 & 10 & 10 & 10 & 4 & 6 & 6 \\
\hline
$\rho_0$& $1$ & $1$ & $1$ & $1$ & $1$ & $1$ & $1$ & $1$ & $1$ \\
$\rho_2$ & $2$ & $-2$ & $\mu^+$ & $-\mu^-$ & $\mu^-$ & $-\mu^+$ & $0$ & $-1$ & $1$ \\
$\rho_3$ & $3$ & $3$ & $\mu^+$ & $\mu^-$ & $\mu^-$ & $\mu^+$ & $-1$ & $0$ & $0$ \\
$\rho_4$ & $4$ & $-4$ & $1$ & $-1$ & $1$ & $-1$ & $0$ & $1$ & $-1$\\
$\rho_5$ & $5$ & $5$ & $0$ & $0$ & $0$ & $0$ & $1$ & $-1$ & $-1$ \\
$\rho_6$ & $6$ & $-6$ & $-1$ & $1$ & $-1$ & $1$ & $0$ & $0$ & $0$ \\
$\rho_4'$ & $4$ & $4$ & $-1$ & $-1$ & $-1$ & $-1$ & $0$ & $1$ & $1$\\
$\rho_2'$ & $2$ & $-2$ & $\mu^-$ & $-\mu^+$ & $\mu^+$ & $-\mu^-$ & $0$ & $-1$ & $1$ \\
$\rho_3''$ & $3$ & $3$ & $\mu^-$ & $\mu^+$ & $\mu^+$ & $\mu^-$ & $-1$ & $0$ & $0$ \\ [1ex]
\end{tabular}
\end{center}
\caption{Character Table for \(2I\)}
\label{tab_Char_e8}
\end{table}

Here \(\rho_2\) is the natural representation. The Riemann-Roch coefficients are computed as follows:
\begin{equation}
\begin{split}
T_i =  &\frac{\chi_i(-1)}{480}+\frac{\chi_i(\sigma)}{20-10\mu^+} + \frac{\chi_i(\sigma^2)}{20+10\mu^-} + \frac{\chi_i(\sigma^3)}{20-10\mu^-} + \\
& + \frac{\chi_i(\sigma^4)}{20+10\mu^+} + \frac{\chi_i(\tau)}{8} + \frac{\chi_i(\sigma^2\tau)}{18} + \frac{\chi_i(\sigma^7\tau)}{6}.
\end{split}
\label{eqn_RR_e8}
\end{equation}
We arrange them in Table \ref{tab_RR_e8}.
\bgroup
\def\arraystretch{1.3}
\begin{table}[h!]
\begin{center}
\begin{tabular}{c|ccccccccc}
 & $\rho_0$ & $\rho_2$ & $\rho_3$ & $\rho_4$ & $\rho_5$ & $\rho_6$ & $\rho_4'$ & $\rho_2'$ & $\rho_3''$\\ \hline
$T_i$ & $\frac{1079}{1440}$ & $\frac{73}{144}$ & $\frac{9}{32}$ & $\frac{29}{360}$ & $-\frac{25}{288}$ & $-\frac{17}{80}$ & $-\frac{61}{360}$ & $-\frac{67}{720}$ & $-\frac{19}{160}$\\ [1ex]
\end{tabular}
\end{center}
\caption{Riemann-Roch Coefficients for \(2O\)}
\label{tab_RR_e8}
\end{table}
\egroup


\begin{thebibliography}{10}

\bibitem{AGV08}
Dan Abramovich, Tom Graber, and Angelo Vistoli, \emph{Gromov-{W}itten theory of
  {D}eligne-{M}umford stacks}, Amer. J. Math. \textbf{130} (2008), no.~5,
  1337--1398. \MR{2450211}


\bibitem{BKR01}
Tom Bridgeland, Alastair King, and Miles Reid, \emph{The {M}c{K}ay
  correspondence as an equivalence of derived categories}, J. Amer. Math. Soc.
  \textbf{14} (2001), no.~3, 535--554. \MR{1824990}

\bibitem{BCY12}
Jim Bryan, Charles Cadman, and Ben Young, \emph{The orbifold topological
  vertex}, Adv. Math. \textbf{229} (2012), no.~1, 531--595. \MR{2854183}

\bibitem{CT19}
Jiun-Cheng Chen and Hsian-Hua Tseng, \emph{On the {B}ogomolov-{M}iyaoka-{Y}au
  inequality for stacky surfaces}, Taiwanese J. Math. (2019), Advance
  publication.

\bibitem{CR04}
Weimin Chen and Yongbin Ruan, \emph{A new cohomology theory of orbifold}, Comm.
  Math. Phys. \textbf{248} (2004), no.~1, 1--31. \MR{2104605}

\bibitem{Cox74}
H.~S.~M. Coxeter, \emph{Regular complex polytopes}, Cambridge University Press,
  London-New York, 1974. \MR{0370328}

\bibitem{Edi13}
Dan Edidin, \emph{Riemann-{R}och for {D}eligne-{M}umford stacks}, A celebration
  of algebraic geometry, Clay Math. Proc., vol.~18, Amer. Math. Soc.,
  Providence, RI, 2013, pp.~241--266. \MR{3114943}

\bibitem{FMN10}
Barbara Fantechi, Etienne Mann, and Fabio Nironi, \emph{Smooth toric
  {D}eligne-{M}umford stacks}, J. Reine Angew. Math. \textbf{648} (2010),
  201--244. \MR{2774310}

\bibitem{Ful84}
William Fulton, \emph{Intersection theory}, Ergebnisse der Mathematik und ihrer
  Grenzgebiete (3) [Results in Mathematics and Related Areas (3)], vol.~2,
  Springer-Verlag, Berlin, 1984. \MR{732620}

\bibitem{Has15}
Mitsuyasu Hashimoto, \emph{Classification of the linearly reductive finite
  subgroup schemes of {$SL_2$}}, Acta Math. Vietnam. \textbf{40} (2015), no.~3,
  527--534. \MR{3395653}

\bibitem{IN99}
Y.~Ito and I.~Nakamura, \emph{Hilbert schemes and simple singularities}, New
  trends in algebraic geometry ({W}arwick, 1996), London Math. Soc. Lecture
  Note Ser., vol. 264, Cambridge Univ. Press, Cambridge, 1999, pp.~151--233.
  \MR{1714824}

\bibitem{Lan00}
Adrian Langer, \emph{Chern classes of reflexive sheaves on normal surfaces},
  Math. Z. \textbf{235} (2000), no.~3, 591--614. \MR{1800214}

\bibitem{Lie11}
Max Lieblich, \emph{Moduli of twisted orbifold sheaves}, Adv. Math.
  \textbf{226} (2011), no.~5, 4145--4182. \MR{2770444}

\bibitem{LR20}
Bronson Lim and Franco Rota, \emph{Characteristic classes and stability
  conditions for projective {K}leinian orbisurfaces}, Math. Z. \textbf{300}
  (2022), no.~1, 827--849. \MR{4359545}

\bibitem{Ols16}
Martin Olsson, \emph{Algebraic spaces and stacks}, American Mathematical
  Society Colloquium Publications, vol.~62, American Mathematical Society,
  Providence, RI, 2016. \MR{3495343}

\bibitem{Toe99}
Bertrand To\"{e}n, \emph{Th\'{e}or\`emes de {R}iemann-{R}och pour les champs de
  {D}eligne-{M}umford}, $K$-Theory \textbf{18} (1999), no.~1, 33--76.
  \MR{1710187}

\bibitem{Tse10}
Hsian-Hua Tseng, \emph{Orbifold quantum {R}iemann-{R}och, {L}efschetz and
  {S}erre}, Geom. Topol. \textbf{14} (2010), no.~1, 1--81. \MR{2578300}

\bibitem{Vistoli}
Angelo Vistoli, \emph{Intersection theory on algebraic stacks and on their
  moduli spaces}, Invent. Math. \textbf{97} (1989), no.~3, 613--670.
  \MR{1005008}

\end{thebibliography}

\providecommand{\bysame}{\leavevmode\hbox to3em{\hrulefill}\thinspace}
\providecommand{\MR}{\relax\ifhmode\unskip\space\fi MR }
\providecommand{\MRhref}[2]{%
  \href{http://www.ams.org/mathscinet-getitem?mr=#1}{#2}
}
\providecommand{\href}[2]{#2}

\end{document}